\documentclass[a4paper,fleqn]{cas-dc}

\usepackage[numbers]{natbib}

\usepackage{amsmath,amssymb,amsthm}
\usepackage{cleveref}

\newtheorem{assumption}{Assumption}
\newtheorem{theorem}{Theorem}
\newtheorem{lemma}{Lemma}

\newcommand{\X}{\mathcal{X}}
\newcommand{\U}{\mathcal{U}}
\newcommand{\W}{\mathcal{W}}
\newcommand{\E}{\mathbb{E}}
\newcommand{\norminf}[1]{\|#1\|_\infty}
\newcommand{\eps}{\varepsilon}
\newcommand{\piR}{\pi_R}
\newcommand{\T}{\tau}

\def\tsc#1{\csdef{#1}{\textsc{\lowercase{#1}}\xspace}}
\tsc{WGM}
\tsc{QE}
\tsc{EP}
\tsc{PMS}
\tsc{BEC}
\tsc{DE}

\begin{document}
\let\WriteBookmarks\relax
\def\floatpagepagefraction{1}
\def\textpagefraction{.001}
\shorttitle{Performance Bounds for Rollout Polices in Stochastic Shortest Path Problems}
\shortauthors{A. Hansson and B. Wahlberg}

\title [mode = title]{Performance Bounds for Rollout Policies in Stochastic Shortest Path Problems}                      
\tnotemark[1]

\tnotetext[1]{This work was partially supported by the Wallenberg AI, Autonomous Systems and
Software Program (WASP) funded by the Knut and Alice Wallenberg Foundation.}

\author[1]{Anders Hansson}
\cormark[1]
\ead{anders.g.hansson@liu.se}

\affiliation[1]{organization={Division of Automatic Control, Linköping Univserity},
                city={Linköping},
                postcode={SE-58183}, ,
                country={Sweden}}

\author[2]{Bo Wahlberg}

\affiliation[2]{organization={Department of Decision and Control Systems,KTH, Royal Institute of Technology},
                postcode={SE-100 44}, 
                city={Stockholm},
                country={Sweden}}

\cortext[cor1]{Corresponding author}

\begin{abstract}
This paper concerns rollout and certainty-equivalent rollout policies for stochastic
shortest path problems with absorbing terminal states. The main result provides a
direct non-asymptotic performance certificate for a fixed rollout policy: the
loss relative to the optimal value is controlled by the uniform accuracy of the
value approximation and by the expected time for which the rollout closed loop
remains away from the terminal state. Thus, in the undiscounted transient
setting, the expected hitting time plays the role of a discount or
finite-horizon parameter in more standard approximate dynamic programming
bounds. This paper also gives a performance-difference identity showing that
suboptimality is exactly accumulated through the transient occupation measure,
and a deterministic sharpness example showing that the hitting-time factor is
unavoidable. Finally, consequences under uniform hitting-time and
Foster--Lyapunov drift conditions are given, and extend the argument to
certainty-equivalent rollout by adding a separate local model-mismatch term.
\end{abstract}

\begin{keywords}
stochastic shortest path \sep rollout \sep approximate dynamic programming \sep hitting time \sep Lyapunov drift
\end{keywords}

\maketitle

\section{Introduction}

Stochastic shortest path (SSP) problems are a canonical undiscounted model in stochastic control and Markov
decision processes. They describe problems in which a controller drives the
state to a terminal set while minimizing cumulative cost before termination.
SSP models arise  in probabilistic planning, robotics, motion planning under uncertainty, and stochastic routing
\cite{teichteil2012,kolobov2012,briggs2004,lim2010}.
Unlike discounted problems, their stability is governed by transience or
properness of the induced closed loop rather than by contraction; see
\cite{bertsekasShreve1978,puterman1994,bertsekas2017vol2}. 

Rollout is a one-step policy-improvement mechanism. Given an approximate
cost-to-go function, the controller minimizes the immediate stage cost plus the approximate
downstream cost. Such approximations may arise from approximate value or policy
iteration, simulation-based approximate dynamic programming, fitted value
iteration, temporal-difference learning, or function approximation
\cite{bertsekasTsitsiklis1996,powell2007,suttonBarto2018,tsitsiklisVanRoy1997,munosSzepesvari2008}.

The fact that an SSP problem is 
undiscounted results in that  the usual intuition
from discounted or finite-horizon dynamic programming is incomplete. There is no
fixed discount factor or prescribed horizon that automatically limits how local
approximation errors propagate. Instead, the relevant horizon is created by the
closed-loop policy itself: errors are accumulated until the terminal state is
reached.

Classical rollout
analysis often measures improvement relative to a specified base policy; for SSP problems the comparison is directly with the optimal SSP value, conditional on the
transience of the policy actually implemented. Discounted and finite-horizon
approximate dynamic programming bounds amplify local approximation errors by a
discount factor or a fixed horizon; here, the corresponding amplifier is the
closed-loop hitting time, which is generated endogenously by the SSP dynamics.
Residual-based SSP bounds, such as those of Hansen~\cite{hansen2017errorBounds},
control the accuracy of value-iteration iterates or Bellman residuals for the
SSP equation; here the question is instead: given an arbitrary value surrogate
used inside a one-step rollout decision, how does the resulting closed-loop
policy perform?

The main contributions of this paper are as follows.
\begin{itemize}
\item A policy-specific a posteriori stochastic shortest path rollout certificate with hitting-time amplification is proved: local value-approximation errors are accumulated through the transient occupation measure of the rollout policy being certified. 
\item An exact performance-difference identity and a deterministic sharpness construction are given, showing that the hitting-time factor is intrinsic, not an artifact of a conservative sup-norm proof. 
\item Checkable consequences are derived under uniform hitting-time and Foster–Lyapunov drift conditions, and the argument is extended to certainty-equivalent rollout by adding a local one-step model-mismatch term.
\end{itemize}

\subsection{Notation}
With $a\wedge b$ we denote the smallest number of $a$ and $b$. 
For any set $D$, we define its indicator function by
\[
\mathbf{1}_D(x)=
\begin{cases}
1, & x\in D,\\
0, & x\notin D.
\end{cases}
\]
\section{Problem setup}

A discrete-time stochastic shortest path problem is studied, with state space
\(\X\), control space \(\U\), and a distinguished absorbing terminal state
\(t\in \X\). For each nonterminal state \(x\in \X\setminus\{t\}\), the set
of admissible controls is denoted by \(U(x)\subseteq \U\), and the states $x_k$
evolve according to
\begin{equation}
\label{eq:state-dynamics}
x_{k+1}=F(x_k,u_k,w_k), \qquad k\ge 0.
\end{equation}
where $F:\X\times \U\times \W\to\X$ is a measurable function. 
Here
\(W=(w_0,w_1,\ldots)\) is a random process with independent, identically distributed random variables $w_k$, 
with values in \(\W\).
The incremental cost is a measurable nonnegative function
\(f:\X\times \U\to [0,\infty)\).
The terminal state convention is
\begin{equation}
\label{eq:terminal-convention}
U(t)=\{0\}, \quad f(t,0)=0, \quad F(t,0,w)=t\;\text{for all }w\in\W.
\end{equation}
Given an initial state \(x_0=x\)
and a sequence of admissible measurable 
policies \(\pi_k:\X\rightarrow \U\), the closed loop state trajectory is defined via
\(
u_k=\pi_k(x_k),
\)
for a time-varying policy, while for a stationary policy we
write simply \(u_k=\pi(x_k)\). The rollout policies studied below are
stationary.
The hitting time of the terminal state is
\[
\T := \inf\{k\ge 0 : x_k=t\},
\]
with the convention \(\T=\infty\) if the terminal state is never reached. Notice that $\tau$ is a function of
the initial state $x$, but we will not make this dependence explicit. 
The total cost of a policy \(\pi\) from initial state \(x\) is
\[
J^{\pi}(x)
:= \E\!\left[\sum_{k=0}^{\T-1} f(x_k,u_k)\right],
\]
where \(\E\) denotes expectation. A policy is called \emph{proper} from state \(x\) if
\(\mathbb{P}(\T<\infty)=1\) and \(J^{\pi}(x)<\infty\). 
The optimal value function is
\[
V^*(x):=\inf_{\pi} J^{\pi}(x), \qquad x\in\X,
\]
with \(V^*(t)=0\). We define the Banach space of
bounded measurable functions on the state space,
\[
\mathcal{B}(\X):=
\{V:\X\to\mathbb{R} : V \text{ is measurable and }\norminf{V}<\infty\}.
\]
For a bounded candidate value function \(V\)
in $\mathcal{B}(\X)$, we define the
Bellman operator
\[
(TV)(x)=
\begin{cases}
\displaystyle \inf_{u\in U(x)}\E\bigl[f(x,u)+V(F(x,u,w))\bigr],
& x\neq t,\\
0, & x=t.
\end{cases}
\]
The rollout policy associated with \(V\) is the one-step greedy policy
\begin{equation}
\label{eq:rollout-policy}
\piR(x)\in \arg\min_{u\in U(x)}\E\bigl[f(x,u)+V(F(x,u,w))\bigr],
\quad x\in \X\setminus\{t\}.
\end{equation}
and set \(\piR(t)=0\).

Our objective is to bound the suboptimality gap
\(J^{\piR}(x)-V^*(x)\) in terms of the approximation error of an
approximate value function \(V\) for the SSP and the
expected time needed by the rollout trajectory to reach the terminal state.

\paragraph{Illustrative role of robot navigation.}
Robot navigation among moving obstacles provides a concrete interpretation of
our SSP setup. Suppose a mobile robot moves toward a target while a pedestrian or
vehicle evolves according to a stochastic motion model. The state may contain
the robot position, the obstacle position, and possibly other variables. The
transition map \(F\) can then be defined by different rules in different regions
of this state-disturbance space. Away from the target, \(F(x,u,w)\) applies the
usual robot dynamics under control \(u\) and updates the obstacle according to
the disturbance \(w\). When the robot position is close enough to the target,
the rule changes: the next state is set equal to the absorbing terminal state
\(t\). Thus, \(t\) represents a successful arrival. After arrival, the systems
in \(t\), no further control action is needed, and no additional cost is incurred,
as encoded by \eqref{eq:terminal-convention}.

This piecewise definition is often the simplest way to turn a navigation task
into an SSP. The target need not be fixed: if the target moves, its position can
be included in the state and updated by the nonterminal part of \(F\), while the
terminal rule still sends the system to \(t\) once the robot is sufficiently
close to the current target position. If the cost is one at each nonterminal
step, the SSP cost is the expected time to reach the target. The state may
include both robot and obstacle positions, making the exact Bellman equation
expensive in the joint state space. An online rollout planner may therefore use
an approximate hitting-time function computed from a reduced or nominal model as
a terminal penalty.

\section{Assumptions}
We make the following assumptions to isolate the certification mechanism from
separate SSP existence, selection, and stability questions. The boundedness
hypotheses are used only to work with a global sup-norm approximation error and
to justify the Bellman non-expansiveness estimates below; they can be replaced
by local or weighted bounds when only a restricted region of the state space is
visited. The measurable-selector assumptions ensure that the greedy rollout and
certainty-equivalent policies are well defined, rather than addressing the
separate issue of the existence of minimizers.

The properness assumption is also deliberately conditional. In an undiscounted
SSP, finite cost bounds require transience of the implemented closed loop; no discount factor is available to make infinite trajectories harmless. The result
should therefore be read as an a posteriori certificate: once the rollout policy
has been formed and its transience has been verified or assumed, the theorem
bounds its loss by accumulating local Bellman errors over its own occupation
measure. The optional conditions \eqref{eq:uniform-hitting-bound} and
\eqref{eq:lyapunov-drift} are included precisely as checkable ways to certify
this transient behavior, with the Lyapunov implication proved in
Theorem~\ref{thm:rollout-bound}.
\begin{assumption}[Well-posed model and selectors]
\label{ass:well-posed}
The functions \(V\) and \(V^*\) belong to \(\mathcal{B}(\X)\), satisfy
\(V(t)=V^*(t)=0\), and the optimal value satisfies
\begin{equation}
\label{eq:optimal-bellman}
V^*=TV^*.
\end{equation}
For every bounded measurable \(G\in\mathcal{B}(\X)\), every
\(x\in\X\setminus\{t\}\), and every \(u\in U(x)\), the expectation
\(\E[G(F(x,u,w))]\) is well defined and finite. The infima defining \(TG\), the
rollout policy \(\piR\), and, when used, the certainty-equivalent policy
\(\pi_{\mathrm{CE}}\), are attained by measurable selectors on the states under
consideration.
\end{assumption}

\begin{assumption}[Proper closed-loop policy]
\label{ass:proper}
There exists at least one proper policy. The closed-loop policy to which a
performance bound is applied is proper from the initial states under
consideration. In particular, \(\piR\) is proper for the rollout theorem, and
\(\pi_{\mathrm{CE}}\) is proper for the certainty-equivalent theorem.
\end{assumption}
\begin{assumption}[Uniform value approximation]
\label{ass:approx}
The approximate value function \(V\) satisfies
\[
\norminf{V-V^*}\le\eps
\]
for some \(\eps\ge 0\).
\end{assumption}
\begin{assumption}[Optional transient-horizon control]
\label{ass:hitting}
For the policy \(\pi\) under consideration, where \(\pi\) is either \(\piR\)
or \(\pi_{\mathrm{CE}}\), one of the following additional conditions may
hold. First, there may be a constant \(N<\infty\) such that
\begin{equation}
\label{eq:uniform-hitting-bound}
\E[\T]\le N
\end{equation}
for all initial states \(x\) of interest. Alternatively, there may be a
Lyapunov function \(L:\X\to[0,\infty)\) and a constant \(c>0\) such that,
for every nonterminal state visited by the closed loop,
\begin{equation}
\label{eq:lyapunov-drift}
\E\bigl[L(F(x,\pi(x),w))\mid x\bigr]\le L(x)-c,
\end{equation}
A stronger sufficient condition is that
\begin{equation}
\label{eq:uniform-lyapunov-drift}
\E\bigl[L(F(x,u,w))\mid x\bigr]\le L(x)-c
\end{equation}
holds uniformly for all \(u\in U(x)\).
\end{assumption}

\section{Key identities and lemmas}
We now collect the identities and inequalities used in the rollout analysis.
The proof strategy is to compare Bellman operators under the sup-norm, convert
the approximation error of \(V\) into a one-step Bellman residual, and then
accumulate this residual along the transient trajectory generated by the greedy
policy.

\begin{lemma}[Sup-norm non-expansiveness of the Bellman operator]
\label{lem:bellman-nonexpansive}
Let \(V_1,V_2\) be bounded functions on \(\X\) with \(V_1(t)=V_2(t)=0\). Then
for every \(x\in\X\),

\begin{align*}
&|(TV_1)(x)-(TV_2)(x)|\le \norminf{V_1-V_2},\\
&\norminf{TV_1-TV_2}\le \norminf{V_1-V_2}.
\end{align*}

\end{lemma}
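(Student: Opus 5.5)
We argue pointwise, splitting on whether \(x\) is terminal, and obtain the sup-norm bound by taking a supremum over \(x\). If \(x=t\), then by definition \((TV_1)(t)=(TV_2)(t)=0\), so the difference is zero and the bound holds trivially. The hypothesis \(V_1(t)=V_2(t)=0\) is not needed for this case. It simply records that \(V_1,V_2\) are legitimate candidates compatible with the terminal convention \eqref{eq:terminal-convention}.

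For \(x\neq t\), let \(\delta:=\norminf{V_1-V_2}\). For every \(u\in U(x)\) we have \(V_1(F(x,u,w))\le V_2(F(x,u,w))+\delta\) pointwise in \(w\). By Assumption~\ref{ass:well-posed}, all expectations of bounded measurable functions of \(F(x,u,w)\) are well defined and finite. Monotonicity and linearity of expectation then give
\[
\E\bigl[f(x,u)+V_1(F(x,u,w))\bigr]\le \E\bigl[f(x,u)+V_2(F(x,u,w))\bigr]+\delta .
\]
Both sides are finite once \(f(x,u)<\infty\), which holds because \(f\) is real valued. Taking the infimum over \(u\in U(x)\) on both sides gives \((TV_1)(x)\le (TV_2)(x)+\delta\). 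This step uses only the elementary fact that \(\inf_u(a_u+\delta)=\inf_u a_u+\delta\), and it does not require the infimum to be attained.

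Exchanging the roles of \(V_1\) and \(V_2\) gives the reverse inequality, hence \(|(TV_1)(x)-(TV_2)(x)|\le\delta\). Taking the supremum over \(x\in\X\) yields the second claim.

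The only point needing care is that the infima might be \(-\infty\) or \(+\infty\), which would make the subtraction meaningless. This cannot happen here. The costs \(f\) are nonnegative, \(V_i\) is bounded, and \(U(x)\) is nonempty (it is an admissible control set), so each infimum lies in \([-\norminf{V_i},\infty)\). It is also finite, since any fixed \(u\in U(x)\) gives a finite upper bound. Hence \(TV_i(x)\) is a real number and the argument above is valid.

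Measurability of \(TV_i\), needed if one wants \(TV_i\in\mathcal{B}(\X)\), is supplied by the selector clause of Assumption~\ref{ass:well-posed}. However, the inequality itself is purely pointwise and does not depend on it.
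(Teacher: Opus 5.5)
Your proof is correct and follows the same route as the paper. The paper cites the standard monotonicity-plus-additive-homogeneity argument and uses the terminal convention at $x=t$; you write that argument out in full, including the check that $(TV_i)(x)$ is finite so that the subtraction is meaningful.
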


\begin{proof}
This is the standard sup-norm non-expansiveness property of the dynamic-programming operator, obtained from monotonicity and additive homogeneity; see,
e.g., \cite[Sec.~1.2]{bertsekasTsitsiklis1996} or \cite[Ch.~6]{puterman1994}.
The terminal convention \eqref{eq:terminal-convention} gives the stated value at
\(t\).
\end{proof}

\begin{lemma}[Residual bound induced by value approximation]
\label{lem:res-bound}
Under Assumption~\ref{ass:approx}, for every \(x\in\X\),
\[
|(TV)(x)-V^*(x)|\le \eps,
\qquad
\norminf{TV-V^*}\le \eps.
\]
\end{lemma}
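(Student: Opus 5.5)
The plan is to reduce the statement to Lemma~\ref{lem:bellman-nonexpansive} applied with \(V_1=V\) and \(V_2=V^*\), using the fixed-point identity \eqref{eq:optimal-bellman} to replace \(TV^*\) by \(V^*\).

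First we check that the hypotheses of Lemma~\ref{lem:bellman-nonexpansive} hold. By Assumption~\ref{ass:well-posed}, both \(V\) and \(V^*\) lie in \(\mathcal{B}(\X)\) and vanish at the terminal state. Assumption~\ref{ass:well-posed} also guarantees that the expectations defining \(TV\) and \(TV^*\) are well defined and finite.

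Next, fix \(x\in\X\). By \eqref{eq:optimal-bellman} we have \(V^*(x)=(TV^*)(x)\), so
\[
|(TV)(x)-V^*(x)|=|(TV)(x)-(TV^*)(x)|\le \norminf{V-V^*}\le\eps,
\]
where the first inequality is Lemma~\ref{lem:bellman-nonexpansive} and the second is Assumption~\ref{ass:approx}. At \(x=t\) both sides of the left-hand difference are zero by the definition of \(T\) and \(V^*(t)=0\), so that case is trivially consistent. Taking the supremum over \(x\) gives \(\norminf{TV-V^*}\le\eps\).

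There is no substantive obstacle. The only point needing care is that the pointwise bound in Lemma~\ref{lem:bellman-nonexpansive} is valid at every \(x\), including when the infimum in \(T\) is not attained. This follows from the elementary estimate \(|\inf_u a_u-\inf_u b_u|\le\sup_u|a_u-b_u|\) combined with \(|\E[V(F(x,u,w))]-\E[V^*(F(x,u,w))]|\le\norminf{V-V^*}\), and both ingredients are already contained in the cited lemma.
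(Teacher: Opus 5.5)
Your proposal is correct and is the paper's proof: substitute \(V^*=TV^*\) from \eqref{eq:optimal-bellman}, apply Lemma~\ref{lem:bellman-nonexpansive} with \(V_1=V\), \(V_2=V^*\), and finish with Assumption~\ref{ass:approx}. Your extra checks (both functions are bounded and vanish at \(t\), and the infimum-difference estimate when no minimizer exists) spell out details that the paper leaves implicit.
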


\begin{proof}
By Assumption~\ref{ass:well-posed}, \(V^*=TV^*\). The result follows
immediately from Lemma~\ref{lem:bellman-nonexpansive} and
Assumption~\ref{ass:approx}.
\end{proof}

\begin{lemma}[Properness and monotone convergence]
\label{lem:proper-monotone}
Let \(\pi\) be a proper policy from state \(x\). Then \(\T<\infty\) almost
surely, the stopped partial sums
\[
S_m^{\pi}(x):=\sum_{k=0}^{(\T\wedge m)-1} f(x_k,u_k)
\]
increase almost surely to \(\sum_{k=0}^{\T-1} f(x_k,u_k)\), and
\[
J^{\pi}(x)=\lim_{m\to\infty}\E[S_m^{\pi}(x)]
=\E\!\left[\sum_{k=0}^{\T-1} f(x_k,u_k)\right]<\infty.
\]
\end{lemma}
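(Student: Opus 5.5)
The plan is to unpack the definition of properness and then apply the monotone convergence theorem to the stopped partial sums. By definition, \(\pi\) proper from \(x\) means \(\mathbb{P}(\T<\infty)=1\) and \(J^{\pi}(x)<\infty\). The first claim, \(\T<\infty\) almost surely, is therefore immediate. Fix an outcome in the full-measure event \(\{\T<\infty\}\). Then for every \(m\ge \T\) we have \(\T\wedge m=\T\), so \(S_m^{\pi}(x)=\sum_{k=0}^{\T-1}f(x_k,u_k)\). Hence the sequence is eventually constant and equal to the full sum.

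Monotonicity comes from the nonnegativity of \(f\). Passing from \(m\) to \(m+1\) either adds the single term \(f(x_m,u_m)\ge 0\), when \(m<\T\), or adds nothing. Thus \(S_m^{\pi}(x)\) is nondecreasing in \(m\) and converges almost surely to \(\sum_{k=0}^{\T-1}f(x_k,u_k)\). On the null event \(\{\T=\infty\}\) the sums still increase, to a possibly infinite limit \(\sum_{k\ge0}f(x_k,u_k)\). This limit agrees with the stated convention for \(\T=\infty\), and the event does not affect expectations in any case.

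For the value identity, I first need measurability. Each \(S_m^{\pi}(x)\) is a finite sum of measurable nonnegative random variables \(f(x_k,u_k)\mathbf{1}_{\{k<\T\}}\). These are measurable because \(F\), \(f\) and \(\pi\) are measurable and \(\{k<\T\}=\{x_0\ne t,\dots,x_k\ne t\}\).

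The monotone convergence theorem then gives
\(\lim_{m\to\infty}\E[S_m^{\pi}(x)]=\E\bigl[\sum_{k=0}^{\T-1}f(x_k,u_k)\bigr]\).
The right-hand side is \(J^{\pi}(x)\) by definition, and it is finite by properness.

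I do not expect a genuine obstacle here. The only point needing care is the null event \(\{\T=\infty\}\) and the measurability of the stopped sums. Both are handled by noting that \(\T\) is a stopping time of the closed-loop process, and that nonnegativity makes every limit well defined in \([0,\infty]\).
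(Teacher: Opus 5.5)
Your proposal is correct and follows essentially the same argument as the paper. Properness gives $\T<\infty$ almost surely and $J^{\pi}(x)<\infty$, nonnegativity of $f$ makes the stopped sums nondecreasing, and monotone convergence identifies $\lim_{m\to\infty}\E[S_m^{\pi}(x)]$ with $J^{\pi}(x)$; your added remarks on measurability via $\{k<\T\}$ and on the null event $\{\T=\infty\}$ are details the paper leaves implicit.
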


\begin{proof}
Almost-sure finiteness of \(\T\) and finiteness of \(J^{\pi}(x)\) are part of
the definition of properness. Since the stage costs are nonnegative,
\(S_m^{\pi}(x)\) is nondecreasing in \(m\) and converges almost surely to the
full cost accumulated up to time \(\T\). Monotone convergence therefore yields
\[
\lim_{m\to\infty}\E[S_m^{\pi}(x)]
=\E\!\left[\sum_{k=0}^{\T-1} f(x_k,u_k)\right],
\]
and the right-hand side is finite by properness.
\end{proof}

Define the optimal state-control advantage
\begin{equation}
\label{eq:advantage-def}
A^*(x,u)
:=f(x,u)+\E\bigl[V^*(F(x,u,w))\bigr]-V^*(x),
\; u\in U(x).
\end{equation}
By \eqref{eq:optimal-bellman}, \(A^*(x,u)\ge 0\) for every admissible state-control pair
\((x,u)\), and equality holds for an optimal control.

\begin{theorem}[Performance-difference identity]
\label{thm:performance-difference}
Suppose Assumptions~\ref{ass:well-posed} and~\ref{ass:proper} hold for a
stationary policy \(\pi\) from the initial state \(x\). Then
\[
J^{\pi}(x)-V^*(x)
=
\E\!\left[
\sum_{k=0}^{\T-1} A^*(x_k,\pi(x_k))
\right].
\]
Equivalently, the suboptimality of any proper stationary policy is exactly the
transient occupation measure of its optimal advantage.
\end{theorem}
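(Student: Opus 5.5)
The plan is a telescoping argument along the closed-loop trajectory, truncated at the finite horizon \(m\) and then passed to the limit with Lemma~\ref{lem:proper-monotone}. Write \(u_k=\pi(x_k)\) and let \(\mathcal{F}_k\) be the \(\sigma\)-algebra generated by \(x_0,w_0,\dots,w_{k-1}\), so that \(x_k\) is \(\mathcal{F}_k\)-measurable and \(w_k\) is independent of \(\mathcal{F}_k\). On the event \(\{k<\T\}\), which lies in \(\mathcal{F}_k\), the definition \eqref{eq:advantage-def} and the independence of \(w_k\) give
\[
\E\bigl[f(x_k,u_k)+V^*(x_{k+1})-V^*(x_k)\,\big|\,\mathcal{F}_k\bigr]=A^*(x_k,u_k).
\]
Here Assumption~\ref{ass:well-posed} ensures that the inner expectation \(\E[V^*(F(x,u,w))]\) is well defined. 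Since \(V^*\) is bounded, every term is integrable.

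Next, multiply by \(\mathbf{1}_{\{k<\T\}}\), sum over \(k=0,\dots,m-1\), and take expectations. The \(V^*\) terms telescope, yielding
\[
\E\!\left[\sum_{k=0}^{(\T\wedge m)-1}A^*(x_k,u_k)\right]
=\E[S_m^{\pi}(x)]+\E[V^*(x_{\T\wedge m})]-V^*(x).
\]
This is the finite-horizon identity. Each side is finite because it is a sum of at most \(m\) integrable terms.

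Finally, let \(m\to\infty\).
\begin{itemize}
\item By Lemma~\ref{lem:proper-monotone}, \(\E[S_m^\pi(x)]\to J^\pi(x)\).
\item Properness gives \(\T<\infty\) almost surely, so \(x_{\T\wedge m}\to x_\T=t\) eventually, and hence \(V^*(x_{\T\wedge m})\to V^*(t)=0\) almost surely. Since \(|V^*(x_{\T\wedge m})|\le\norminf{V^*}\), dominated convergence gives \(\E[V^*(x_{\T\wedge m})]\to 0\).
\item By \eqref{eq:optimal-bellman}, \(A^*\ge 0\), so the left-hand side increases monotonically in \(m\). Monotone convergence then identifies its limit with \(\E\bigl[\sum_{k=0}^{\T-1}A^*(x_k,\pi(x_k))\bigr]\).
\end{itemize}
Combining these three limits gives the stated identity, and it also shows that this expectation is finite.

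The main technical point is the limit passage. One cannot telescope directly over the infinite random horizon, because \(\T\) need not be bounded. Truncation at \(\T\wedge m\), together with boundedness of \(V^*\) (through dominated convergence) and nonnegativity of \(A^*\) and \(f\) (through monotone convergence), is what makes the argument rigorous. A secondary step is to check that the conditional-expectation step is legitimate. This holds because \(\{k<\T\}\in\mathcal{F}_k\) and because \(\pi\) is stationary and measurable, so the one-step expectation evaluated at \((x_k,\pi(x_k))\) is exactly \(A^*(x_k,\pi(x_k))+V^*(x_k)-f(x_k,\pi(x_k))\).
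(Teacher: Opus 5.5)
Your proposal is correct and follows essentially the same route as the paper: a one-step conditional identity for $A^*$, truncation at $\T\wedge m$ with pathwise telescoping of the $V^*$ terms, and the limit $m\to\infty$ via Lemma~\ref{lem:proper-monotone}, dominated convergence for $\E[V^*(x_{\T\wedge m})]$, and monotone convergence using $A^*\ge 0$. Your filtration $\mathcal{F}_k$ only makes explicit the conditioning that the paper performs on $x_k$, and one small justification needs changing: $f$ may be unbounded, so integrability of the terms $\mathbf{1}_{\{k<\T\}}f(x_k,u_k)$ comes from $\E[S_m^{\pi}(x)]\le J^{\pi}(x)<\infty$ by properness, not from boundedness of $V^*$.
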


\begin{proof}
For \(k\ge 0\), condition on the current state \(x_k\). In
\eqref{eq:advantage-def}, the term \(\E[V^*(F(x,u,w))]\) is the expectation
over \(w\) with \((x,u)\) fixed. Therefore, after substituting
\((x,u)=(x_k,\pi(x_k))\),
\begin{equation}
\label{eq:advantage-at-trajectory}
\begin{aligned}
V^*(x_k)+A^*(x_k,\pi(x_k))
&=f(x_k,\pi(x_k))\\
&\quad+\E[V^*(F(x_k,\pi(x_k),w_k))\mid x_k].
\end{aligned}
\end{equation}
Here, the conditional expectation is justified because, given \(x_k\), the
disturbance \(w_k\) is independent of the past and has the same
distribution as \(w\). Using the state dynamics \eqref{eq:state-dynamics},
\eqref{eq:advantage-at-trajectory} is equivalently
\begin{equation}
\label{eq:conditioned-advantage}
\E\bigl[f(x_k,\pi(x_k))+V^*(x_{k+1})\mid x_k\bigr]
=V^*(x_k)+A^*(x_k,\pi(x_k)).
\end{equation}
Fix \(m\ge 1\) and set \(\sigma_m:=\T\wedge m\). For \(0\le k<m\), the
terminal convention \eqref{eq:terminal-convention} implies
\begin{equation}
\label{eq:stopping-indicator-state}
\mathbf{1}_{\{j:j<\sigma_m\}}(k)=\mathbf{1}_{\{t\}^c}(x_k).
\end{equation}
Multiplying \eqref{eq:conditioned-advantage} by
\(\mathbf{1}_{\{t\}^c}(x_k)\), taking expectations, and using the law of total expectation
yields
\begin{equation}
\label{eq:stopped-advantage-step}
\begin{aligned}
&\E\!\left[
\mathbf{1}_{\{j:j<\sigma_m\}}(k)
\bigl(f(x_k,\pi(x_k))+V^*(x_{k+1})\bigr)
\right]\\
&\qquad=
\E\!\left[
\mathbf{1}_{\{j:j<\sigma_m\}}(k)
\bigl(V^*(x_k)+A^*(x_k,\pi(x_k))\bigr)
\right].
\end{aligned}
\end{equation}
Summing \eqref{eq:stopped-advantage-step} over \(k=0,\ldots,m-1\) gives
\[
\begin{aligned}
&\E\!\left[
\sum_{k=0}^{\sigma_m-1}
\bigl(f(x_k,\pi(x_k))+V^*(x_{k+1})\bigr)
\right]\\
&\qquad=
\E\!\left[
\sum_{k=0}^{\sigma_m-1}
\bigl(V^*(x_k)+A^*(x_k,\pi(x_k))\bigr)
\right].
\end{aligned}
\]
Rearranging and using the pathwise telescoping identity
\[
\sum_{k=0}^{\sigma_m-1}\bigl(V^*(x_k)-V^*(x_{k+1})\bigr)
=V^*(x)-V^*(x_{\sigma_m})
\]
yields
\begin{equation}
\label{eq:stopped-pdi}
\begin{aligned}
&\E\!\left[
\sum_{k=0}^{\sigma_m-1} f(x_k,\pi(x_k))
\right]-V^*(x)\\
&
=\E\!\left[
\sum_{k=0}^{\sigma_m-1} A^*(x_k,\pi(x_k))
\right]
-\E[V^*(x_{\sigma_m})].
\end{aligned}
\end{equation}
Because \(\pi\) is proper, \(\T<\infty\) almost surely. Since \(V^*\) is
bounded and \(V^*(t)=0\), dominated convergence gives
\(\E[V^*(x_{\sigma_m})]\to 0\). Lemma~\ref{lem:proper-monotone} gives convergence of the left-hand cost term in
\eqref{eq:stopped-pdi}. Since \(A^*\ge 0\), monotone convergence applies to
\[
\E\!\left[\sum_{k=0}^{\sigma_m-1} A^*(x_k,\pi(x_k))\right].
\]
Letting \(m\to\infty\) in \eqref{eq:stopped-pdi} proves the identity.
\end{proof}

\begin{lemma}[One-step rollout inequality]
\label{lem:one-step-rollout}
For every nonterminal state \(x\in\X\setminus\{t\}\),
\[
\E\bigl[f(x,\piR(x))+V^*(F(x,\piR(x),w))\bigr]
\le V^*(x)+2\eps.
\]
At the terminal state, the claim follows from \eqref{eq:terminal-convention}.
\end{lemma}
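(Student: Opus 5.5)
The plan is to compare the $V^*$-based one-step cost of $\piR(x)$ with its $V$-based counterpart. By greediness, the $V$-based counterpart equals $(TV)(x)$. The sup-norm bound of Assumption~\ref{ass:approx} is then used twice, once on the next state and once through Lemma~\ref{lem:res-bound}. Fix a nonterminal $x$ and write $u=\piR(x)$ and $Q_G(x,u):=\E\bigl[f(x,u)+G(F(x,u,w))\bigr]$ for bounded $G$. By Assumption~\ref{ass:well-posed}, this expectation is finite for $G\in\{V,V^*\}$, and the minimizer in \eqref{eq:rollout-policy} is attained.

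\textbf{Step 1 (swap $V^*$ for $V$ at the next state).} Since $|V^*(y)-V(y)|\le\eps$ for every $y\in\X$, including $y=t$ where both vanish, linearity and monotonicity of expectation give
\[
Q_{V^*}(x,u)\le Q_V(x,u)+\eps .
\]

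\textbf{Step 2 (use greediness).} By the definition \eqref{eq:rollout-policy} of $\piR$ as an attained minimizer, we have $Q_V(x,u)=\inf_{v\in U(x)}Q_V(x,v)=(TV)(x)$.

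\textbf{Step 3 (residual bound).} Lemma~\ref{lem:res-bound} yields $(TV)(x)\le V^*(x)+\eps$. Chaining the three steps gives
\[
Q_{V^*}(x,\piR(x))\le V^*(x)+2\eps,
\]
which is the claim. At $x=t$, the convention \eqref{eq:terminal-convention} gives $f(t,0)=0$ and $F(t,0,w)=t$, so the left side is $V^*(t)=0\le 2\eps$.

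There is no substantive obstacle here. The only points needing care are measure-theoretic: integrability of $V(F(x,u,w))$ and $V^*(F(x,u,w))$, which Assumption~\ref{ass:well-posed} provides, and attainment of the argmin, so that $Q_V(x,\piR(x))$ equals $(TV)(x)$ rather than merely approximating it. If only $\delta$-optimal selection were available, the same chain would give $2\eps+\delta$. Equivalently, in terms of \eqref{eq:advantage-def}, the statement reads $A^*(x,\piR(x))\le 2\eps$. This is the form that will be fed into Theorem~\ref{thm:performance-difference} to obtain the hitting-time bound $J^{\piR}(x)-V^*(x)\le 2\eps\,\E[\T]$.
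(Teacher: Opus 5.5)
Your proof is correct and matches the paper's: greediness identifies the $V$-based one-step cost with $(TV)(x)$, the pointwise bound $V^*\le V+\eps$ at the successor state gives one $\eps$, and Lemma~\ref{lem:res-bound} gives the other. Your side remark that the result reads $A^*(x,\piR(x))\le 2\eps$ and could be combined with Theorem~\ref{thm:performance-difference} is also valid, although the paper instead redoes the stopped-sum argument directly in the proof of Theorem~\ref{thm:rollout-bound}.
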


\begin{proof}
Fix \(x\neq t\). Equation~\eqref{eq:rollout-policy} gives
\begin{equation}
\label{eq:rollout-min-step}
\E\bigl[f(x,\piR(x))+V(F(x,\piR(x),w))\bigr]=(TV)(x).
\end{equation}
Using the pointwise approximation bound \(|V(y)-V^*(y)|\le \eps\) for every
\(y\in\X\), we obtain
\[
V^*(F(x,\piR(x),w)) \le V(F(x,\piR(x),w)) + \eps.
\]
After adding \(f(x,\piR(x))\), taking expectations, and using
\eqref{eq:rollout-min-step},
\[
\begin{aligned}
&\E\bigl[f(x,\piR(x))
+V^*(F(x,\piR(x),w))\bigr]\\
&\qquad\le (TV)(x)+\eps.
\end{aligned}
\]
By Lemma~\ref{lem:res-bound}, \((TV)(x)\le V^*(x)+\eps\), and therefore
\[
\E\bigl[f(x,\piR(x))+V^*(F(x,\piR(x),w))\bigr]
\le V^*(x)+2\eps.
\]
If \(x=t\), then \eqref{eq:terminal-convention} gives equality.
\end{proof}

\section{Main theorem}
The following theorem is the SSP analog of a discount-amplified approximate
policy-improvement bound: the local greedy error is accumulated over the
rollout policy's own expected transient length.

\begin{theorem}[Rollout performance bound]
\label{thm:rollout-bound}
Suppose Assumptions~\ref{ass:well-posed}--\ref{ass:approx} hold. Then, for
every initial state \(x\) from which \(\piR\) is proper,
\[
J^{\piR}(x)-V^*(x)
\le 2\eps\,\E[\T],
\]
where \(\T\) is the hitting time of the terminal state under \(\piR\). If
\eqref{eq:uniform-hitting-bound} holds, then
\[
J^{\piR}(x)-V^*(x)\le 2N\eps.
\]
If instead \eqref{eq:lyapunov-drift} holds along \(\piR\), then
\[
J^{\piR}(x)-V^*(x)\le \tfrac{2\eps}{c}L(x).
\]
\end{theorem}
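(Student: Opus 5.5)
The plan is to combine the performance-difference identity (Theorem~\ref{thm:performance-difference}) with the one-step rollout inequality (Lemma~\ref{lem:one-step-rollout}). Since \(\piR\) is stationary and proper from \(x\), Theorem~\ref{thm:performance-difference} gives
\[
J^{\piR}(x)-V^*(x)=\E\!\left[\sum_{k=0}^{\T-1}A^*(x_k,\piR(x_k))\right].
\]
By the definition \eqref{eq:advantage-def}, Lemma~\ref{lem:one-step-rollout} is precisely the pointwise statement \(A^*(y,\piR(y))\le 2\eps\) for every nonterminal \(y\). For \(k<\T\) we have \(x_k\neq t\), so each summand is at most \(2\eps\). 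The sum is therefore bounded pathwise by \(2\eps\T\), and taking expectations yields \(J^{\piR}(x)-V^*(x)\le 2\eps\,\E[\T]\). Because \(A^*\ge 0\), the interchange of sum and expectation is covered by monotone convergence, exactly as in the proof of Theorem~\ref{thm:performance-difference}. If \(\E[\T]=\infty\), the bound is trivial.

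The uniform hitting-time case then follows immediately by inserting \eqref{eq:uniform-hitting-bound}.

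For the Lyapunov case, the step that needs genuine work is to show \(\E[\T]\le L(x)/c\) under \eqref{eq:lyapunov-drift} along \(\piR\). I would run a stopped supermartingale argument with \(\sigma_m=\T\wedge m\).
\begin{itemize}
\item On \(\{k<\T\}\), which coincides with \(\{x_k\neq t\}\) by \eqref{eq:stopping-indicator-state}, the drift condition gives \(\E[L(x_{k+1})\mid x_k]\le L(x_k)-c\).
\item Multiplying by \(\mathbf{1}_{\{t\}^c}(x_k)\), taking expectations, and summing over \(k=0,\dots,m-1\) gives
\[
\E\!\left[\sum_{k=0}^{\sigma_m-1}\bigl(L(x_{k+1})-L(x_k)\bigr)\right]\le -c\,\E[\sigma_m].
\]
\item The left-hand side telescopes pathwise to \(\E[L(x_{\sigma_m})]-L(x)\).
\item Since \(L\ge 0\), this yields \(c\,\E[\T\wedge m]\le L(x)-\E[L(x_{\sigma_m})]\le L(x)\).
\item Letting \(m\to\infty\), monotone convergence gives \(\E[\T]\le L(x)/c\).
\end{itemize}
Substituting into the main bound gives \(J^{\piR}(x)-V^*(x)\le \tfrac{2\eps}{c}L(x)\).

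The main technical obstacle is integrability in the summation step, because \(L\) is not assumed bounded. Working with truncated sums over \(k<m\) keeps everything finite provided each \(\E[L(x_k)\mathbf{1}_{\{k<\T\}}]\) is finite. I would establish this inductively from the drift inequality itself, starting from \(L(x)<\infty\). As a remark, the drift argument also yields \(\T<\infty\) almost surely, so it certifies the properness that the first part assumes.
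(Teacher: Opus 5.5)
Your proof is correct, but for the main bound it takes a different route from the paper. The paper does not invoke Theorem~\ref{thm:performance-difference}. It reruns the stopped-sum argument directly with the one-step \emph{inequality} of Lemma~\ref{lem:one-step-rollout}: condition on \(x_k\), multiply by \(\mathbf{1}_{\{j:j<\sigma_m\}}(k)\), telescope the \(V^*\)-terms, discard \(-\E[V^*(x_{\sigma_m})]\) using \(V^*\ge 0\), and let \(m\to\infty\).

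You instead factor the argument as an exact identity plus a pointwise bound. Theorem~\ref{thm:performance-difference} writes the loss as the occupation sum of advantages, and Lemma~\ref{lem:one-step-rollout} is literally \(A^*(y,\piR(y))\le 2\eps\) for \(y\neq t\). So the sum is at most \(2\eps\T\) pathwise; monotonicity of expectation suffices here, and no limit interchange is needed at this step.

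Your version is more modular. It reuses the identity instead of repeating the telescoping. It also gives the state-dependent and \(\eta\)-greedy variants of the Remark, and the occupation-measure reading of the sharpness example, at no extra cost.

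The paper's direct route is slightly more economical in hypotheses. It only drops the boundary term rather than sending it to zero, so it never needs \(\T<\infty\) a.s.\ or \(J^{\piR}(x)<\infty\) in advance. Monotone convergence alone then gives \(J^{\piR}(x)\le V^*(x)+2\eps\E[\T]\), so properness comes for free whenever \(\E[\T]<\infty\). Your appeal to Theorem~\ref{thm:performance-difference}, by contrast, genuinely uses properness.

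This bears on your closing remark. The drift bound gives \(\T<\infty\) a.s., but properness also requires finite cost. That follows from \(f(y,\piR(y))\le V^*(y)+2\eps\le\norminf{V^*}+2\eps\), which is Lemma~\ref{lem:one-step-rollout} combined with \(V^*\ge0\).

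Your Lyapunov step is the paper's argument. Your inductive check that \(\E[L(x_k)\mathbf{1}_{\{k<\T\}}]<\infty\) fills in a finiteness point that the paper's rearrangement passes over silently.
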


\begin{proof}
Fix an initial state \(x_0=x\), and let the trajectory be generated
by \(\piR\). Put \(\sigma_m:=\T\wedge m\).
First, apply Lemma~\ref{lem:one-step-rollout} conditionally on the current
state \(x_k\). This uses the same conditioning argument as in the 
proof of Theorem~1. 
Thus, for every \(k\ge 0\),
\begin{equation}
\label{eq:rollout-one-step-path}
\E\bigl[f(x_k,\piR(x_k))+V^*(x_{k+1})\mid x_k\bigr]
\le V^*(x_k)+2\eps\,\mathbf{1}_{\{t\}^c}(x_k).
\end{equation}
Second, multiply \eqref{eq:rollout-one-step-path} by
\(\mathbf{1}_{\{j:j<\sigma_m\}}(k)\). By the same argument as in
\eqref{eq:stopping-indicator-state}, this factor is a function of \(x_k\) for
\(0\le k<m\). Taking expectations and using the law of total expectation gives
\[
\begin{aligned}
&\E\Bigl[\mathbf{1}_{\{j:j<\sigma_m\}}(k)
\bigl(f(x_k,\piR(x_k))+V^*(x_{k+1})\bigr)\Bigr]\\
&\quad\le
\E\Bigl[\mathbf{1}_{\{j:j<\sigma_m\}}(k)V^*(x_k)\Bigr]
+2\eps\,\mathbb{P}(k<\sigma_m).
\end{aligned}
\]
Equivalently,
\begin{equation}
\label{eq:rollout-stopped-cost-step}
\begin{aligned}
&\E\Bigl[\mathbf{1}_{\{j:j<\sigma_m\}}(k)f(x_k,\piR(x_k))\Bigr]\\
&\quad\le
\E\Bigl[\mathbf{1}_{\{j:j<\sigma_m\}}(k)
\bigl(V^*(x_k)-V^*(x_{k+1})\bigr)\Bigr]
+2\eps\,\mathbb{P}(k<\sigma_m).
\end{aligned}
\end{equation}
Third, sum \eqref{eq:rollout-stopped-cost-step} for \(k=0,\ldots,m-1\).
The value terms telescope:
\[
\sum_{k=0}^{m-1}\mathbf{1}_{\{j:j<\sigma_m\}}(k)
\bigl(V^*(x_k)-V^*(x_{k+1})\bigr)
=V^*(x)-V^*(x_{\sigma_m}).
\]
Hence
\begin{equation}
\label{eq:rollout-stopped-bound-with-tail}
\begin{aligned}
\E\Bigl[\sum_{k=0}^{\sigma_m-1}f(x_k,\piR(x_k))\Bigr]
&\le V^*(x)-\E[V^*(x_{\sigma_m})]\\
&+2\eps\sum_{k=0}^{m-1}\mathbb{P}(k<\sigma_m).
\end{aligned}
\end{equation}
Since \(V^*\ge 0\) and
\[
\sum_{k=0}^{m-1}\mathbb{P}(k<\sigma_m)=\E[\sigma_m],
\]
\eqref{eq:rollout-stopped-bound-with-tail} implies
\[
\E\Bigl[\sum_{k=0}^{\sigma_m-1}f(x_k,\piR(x_k))\Bigr]
\le V^*(x)+2\eps\,\E[\sigma_m].
\]
Fourth, let \(m\to\infty\). Since \(\sigma_m\uparrow\T\),
Lemma~\ref{lem:proper-monotone} and monotone convergence give
\begin{equation}
\label{eq:rollout-limit-bound}
J^{\piR}(x)
\le V^*(x)+2\eps\,\E[\T].
\end{equation}
Rearranging \eqref{eq:rollout-limit-bound} gives the first claim. If \eqref{eq:uniform-hitting-bound} holds, the bound becomes
\(J^{\piR}(x)-V^*(x)\le 2N\eps\).
It remains to prove the consequence of \eqref{eq:lyapunov-drift}. Fix
\(m\ge1\), set \(\sigma_m:=\T\wedge m\), and note that
\eqref{eq:lyapunov-drift} is applied only on the set
\(\{j:j<\sigma_m\}\), where the current state is still nonterminal.
Multiplying \eqref{eq:lyapunov-drift} by
\(\mathbf{1}_{\{j:j<\sigma_m\}}(k)\), taking expectations, and using the law of
total expectation gives, for \(0\le k<m\),
\[
\begin{aligned}
\E\bigl[\mathbf{1}_{\{j:j<\sigma_m\}}(k)L(x_{k+1})\bigr]
&\le
\E\bigl[\mathbf{1}_{\{j:j<\sigma_m\}}(k)L(x_k)\bigr]\\
&-c\,\mathbb{P}(k<\sigma_m).
\end{aligned}
\]
Equivalently,
\[
c\,\mathbb{P}(k<\sigma_m)
\le
\E\bigl[\mathbf{1}_{\{j:j<\sigma_m\}}(k)
\bigl(L(x_k)-L(x_{k+1})\bigr)\bigr].
\]
Summing from \(k=0\) to \(m-1\) and using pathwise telescoping up to the
stopped time \(\sigma_m\) gives
\[
c\,\E[\sigma_m]
=c\sum_{k=0}^{m-1}\mathbb{P}(k<\sigma_m)
\le
\E\bigl[L(x)-L(x_{\sigma_m})\bigr]
\le L(x),
\]
where the last inequality uses \(L\ge0\). Hence
\(\E[\T\wedge m]\le L(x)/c\). Letting \(m\to\infty\) and using monotone convergence gives the proved consequence
\begin{equation}
\label{eq:lyapunov-hitting-bound}
\E[\T]\le \frac{L(x)}{c}.
\end{equation}
Substituting \eqref{eq:lyapunov-hitting-bound} gives
\(J^{\piR}(x)-V^*(x)\le (2\eps/c)L(x)\).
\end{proof}

\paragraph{Remark.}
The theorem is stated with a global sup-norm approximation error in order to
keep the certificate simple. The proof above shows more precisely where this
assumption is used: only the successor states entering the one-step rollout
comparisons and the states visited by the resulting closed loop matter. Thus, the
same stopped-sum argument yields a state-dependent error version. If the
one-step rollout inequality holds along the closed-loop trajectory with a
nonnegative local error \(e(x_k)\), namely
\[
\begin{aligned}
&\E\bigl[f(x_k,\piR(x_k))+V^*(x_{k+1})\mid x_k\bigr]\\
&\qquad\le V^*(x_k)+e(x_k),
\end{aligned}
\]
then
\[
J^{\piR}(x)-V^*(x)
\le
\E\!\left[\sum_{k=0}^{\T-1} e(x_k)\right].
\]
In particular, a state-dependent value-approximation error \(\eps(x)\) gives
the bound \(J^{\piR}(x)-V^*(x)
\le 2\E[\sum_{k=0}^{\T-1}\eps(x_k)]\) whenever the corresponding local one-step inequality holds. Similarly, if \(\piR\) is only \(\eta\)-greedy in
the one-step minimization, the right-hand side of the main bound is increased
by \(\eta\E[\T]\). The boundedness assumptions play the same
technical role: they make the sup-norm error and stopped expectations
immediately well defined. Local integrability assumptions, or weighted-norm
bounds on the visited region, lead to the same argument with \(\E[\T]\) replaced
by the corresponding weighted occupation time.

\section{Sharpness and occupation-measure interpretation}
The preceding theorem is not merely a consequence of a loose uniform bound.
The performance-difference identity shows that the rollout error is accumulated
through the transient occupation measure. The next deterministic construction
shows that the resulting hitting-time dependence is unavoidable.

\begin{theorem}[Hitting-time amplification is unavoidable]
\label{thm:sharpness}
For every integer \(M\ge 1\) and every \(\eps>0\), there exists a deterministic
SSP, an initial state \(x\), and an approximate value function \(V\) satisfying
\(\norminf{V-V^*}\le\eps\), such that the rollout policy \(\piR\) is proper,
\(\E[\T]=M+1\), and
\[
J^{\piR}(x)-V^*(x)
\ge \frac{\eps}{4}\,\E[\T].
\]
Consequently, no uniform bound of the form
\(J^{\piR}(x)-V^*(x)\le C\eps\) can hold over SSPs with a universal constant
\(C\) independent of the rollout hitting time, or more generally, independent of the transient occupation measure.
\end{theorem}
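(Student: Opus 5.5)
The plan is to build an explicit finite deterministic chain in which the rollout policy is tempted, at every nonterminal step, into a slightly suboptimal "continue" action. Each such step has optimal advantage of order \(\eps\). The occupation measure then accumulates these advantages \(M\) times, exactly as Theorem~\ref{thm:performance-difference} predicts.

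\textbf{Construction.} Fix \(M\ge1\), \(\eps>0\) and a constant \(K>0\) (e.g.\ \(K=1\)). The state space is \(\X=\{s_0,\dots,s_M,t\}\) with a trivial disturbance.
\begin{itemize}
\item At \(s_i\) with \(i<M\) there are two controls. The control \(g\) ("quit") moves to \(t\) at cost \(K\). The control \(b\) ("continue") moves to \(s_{i+1}\) at cost \(\eps/2\).
\item At \(s_M\) only \(g\) is available, moving to \(t\) at cost \(K\).
\end{itemize}
Since the spaces are finite, Assumptions~\ref{ass:well-posed} and~\ref{ass:proper} are automatic once \(V^*\) is identified and \(\piR\) is shown to terminate. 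The initial state is \(x=s_0\).

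\textbf{Step 1: optimal value.} By backward induction from \(s_M\), I will show \(V^*(s_i)=K\) for all \(i\). At \(s_M\) this is forced. If \(V^*(s_{i+1})=K\), then at \(s_i\) the continue action costs \(\eps/2+K>K\), so quitting is optimal and \(V^*(s_i)=K\). Hence \(V^*=TV^*\) with \(V^*(t)=0\).

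\textbf{Step 2: approximate value and rollout policy.} Define \(V(s_i)=K-\eps\) for all \(i\) and \(V(t)=0\), so \(\norminf{V-V^*}=\eps\). At \(s_i\) with \(i<M\), the two rollout Q-values are:
\begin{itemize}
\item continue: \(\eps/2+V(s_{i+1})=K-\eps/2\);
\item quit: \(K+V(t)=K\).
\end{itemize}
So \eqref{eq:rollout-policy} strictly selects continue, and no tie-breaking issue arises. The closed loop therefore runs \(s_0\to s_1\to\cdots\to s_M\to t\). It is proper, with \(\T=M+1\) deterministically.

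\textbf{Step 3: gap and conclusion.} The rollout cost is \(J^{\piR}(s_0)=M\eps/2+K\), so
\[
J^{\piR}(s_0)-V^*(s_0)=\frac{M\eps}{2}\ge\frac{(M+1)\eps}{4}=\frac{\eps}{4}\,\E[\T],
\]
using \(2M\ge M+1\) for \(M\ge1\). As a cross-check, Theorem~\ref{thm:performance-difference} gives the same gap: the optimal advantage of continue is \(\eps/2\) at each of the \(M\) visited states \(s_0,\dots,s_{M-1}\), and zero at \(s_M\).

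The "consequently" part follows by letting \(M\to\infty\) with \(\eps\) fixed. The gap grows linearly in \(\E[\T]\), so no constant \(C\) independent of the hitting time can bound it by \(C\eps\).

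The only delicate point is choosing the continuation cost (here \(\eps/2\)) in a window that works both ways. It must be positive, so that continuing is genuinely suboptimal under \(V^*\). It must also be below \(\eps\), the amount by which \(V\) underestimates the continuation value, so that the rollout strictly prefers continuing. Once this window is identified, every remaining step is a finite direct computation.
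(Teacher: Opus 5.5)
Your proof is correct and is essentially the paper's construction: the same chain, a stop/continue choice at each state with continuation cost $\eps/2$, and a $V$ that underestimates $V^*$ by $\eps$ on nonterminal states, so rollout continues $M$ times and loses exactly $M\eps/2$. The only difference is your quit cost $K$, which just shifts $V^*$ and $V$ by $K$ on nonterminal states; setting $K=0$ recovers the paper's example exactly.
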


\begin{proof}
Consider the deterministic SSP with states \(0,1,\ldots,M,t\), terminal state
\(t\), and initial state \(x=0\). At each state \(i<M\), there are two controls.
The control \(s\) stops immediately, sending the state to \(t\) with cost zero.
The control \(c\) continues to \(i+1\) with cost \(\eps/2\). At state \(M\), the
only admissible control sends the state to \(t\) with cost zero. The terminal
state is absorbing with zero cost.
The optimal policy stops immediately from every state, so \(V^*(i)=0\) for all
\(i\) and \(V^*(t)=0\). Define the approximate value function by
\(V(t)=0\) and \(V(i)=-\eps\) for all \(i=0,1,
\ldots,M\). Then \(\norminf{V-V^*}=\eps\). At every state \(i<M\), the quantity minimized
in \eqref{eq:rollout-policy} equals \(0+V(t)=0\) for control \(s\), while for
control \(c\) it equals
\[
\frac{\eps}{2}+V(i+1)=-\frac{\eps}{2}<0.
\]
Thus \eqref{eq:rollout-policy} selects control \(c\) at states \(0,1,
\ldots,M-1\), and then stops from state \(M\). It is proper, with
\(\E[\T]=M+1\), and its total cost is
\(J^{\piR}(0)=M\eps/2\). Since \(V^*(0)=0\),
\[
J^{\piR}(0)-V^*(0)=\frac{M\eps}{2}
\ge \frac{\eps}{4}(M+1)
=\frac{\eps}{4}\E[\T],
\]
where the inequality uses \(M\ge 1\). Taking \(M\) arbitrarily large rules out a
bound with a constant independent of the rollout transient length.
\end{proof}

This example is also sharp in the sense of
Theorem~\ref{thm:performance-difference}: each continue control has optimal
advantage \(\eps/2\), and the rollout policy occupies such states for \(M\)
steps. The total loss is therefore exactly the occupation sum of these local
advantages. The factor \(\E[\T]\) in Theorem~\ref{thm:rollout-bound} is thus intrinsic to SSP rollout analysis, not an artifact of the proof.

\section{Certainty equivalence}
We now specialize the preceding argument to a certainty-equivalent (CE)
implementation in which the disturbance is replaced by a nominal value
\(\bar w\in \W\), for example its mean value. The resulting policy is defined by
\begin{equation}
\label{eq:ce-policy}
\pi_{\mathrm{CE}}(x)
\in \arg\min_{u\in U(x)}\bigl\{f(x,u)+V(F(x,u,\bar w))\bigr\},
\; x\in \X\setminus\{t\}.
\end{equation}
and set \(\pi_{\mathrm{CE}}(t)=0\). The point is that the control is chosen using
the nominal next state, while the true system still evolves according to
\eqref{eq:state-dynamics}.

To quantify the extra loss caused by replacing the stochastic lookahead with its certainty-equivalent surrogate, define the model-mismatch term
\begin{equation}
\label{eq:ce-mismatch}
\delta:=\sup_{x\in\X\setminus\{t\}}\sup_{u\in U(x)}
\Bigl|\E\bigl[V(F(x,u,w))\bigr]-V(F(x,u,\bar w))\Bigr|.
\end{equation}
This quantity measures how far the nominal one-step evaluation is from the true
expected one-step evaluation when both are computed with the approximate value
function \(V\). It is therefore the natural analog, in the CE setting, of
the approximation error parameter \(\eps\). The definition is deliberately local
to the one-step lookahead used by the policy: in applications it can be bounded
from a model-error estimate, from concentration or quadrature bounds for the
one-step expectation, or directly by evaluating the two terms in
\eqref{eq:ce-mismatch} on the state-control region visited by the planner. The theorem does not require this
bound to be sharp, but any conservatism in \(\delta\) is accumulated over the
same hitting-time factor as the value-approximation error.

The CE analog of Lemma~\ref{lem:one-step-rollout} is the following one-step estimate.

\begin{lemma}[One-step certainty-equivalent inequality]
\label{lem:one-step-ce}
For every nonterminal state \(x\in\X\setminus\{t\}\),
\[
\E\bigl[f(x,\pi_{\mathrm{CE}}(x))+V^*(F(x,\pi_{\mathrm{CE}}(x),w))\bigr]
\le V^*(x)+2(\eps+\delta).
\]
At the terminal state, the claim follows from \eqref{eq:terminal-convention}.
\end{lemma}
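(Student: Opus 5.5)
The argument mirrors the proof of Lemma~\ref{lem:one-step-rollout}, with the nominal evaluation inserted as an intermediate comparison. Fix \(x\neq t\), write \(u_{\mathrm{CE}}:=\pi_{\mathrm{CE}}(x)\), and define the true and nominal one-step evaluations with the surrogate \(V\),
\[
Q(x,u):=f(x,u)+\E\bigl[V(F(x,u,w))\bigr],\qquad
\bar Q(x,u):=f(x,u)+V(F(x,u,\bar w)).
\]
By the definition \eqref{eq:ce-mismatch} of \(\delta\), \(|Q(x,u)-\bar Q(x,u)|\le\delta\) for every \(u\in U(x)\). This is the only new ingredient compared to the rollout case.

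The chain of inequalities is as follows. First, replace \(V^*\) by \(V\) pointwise at the successor state, exactly as in Lemma~\ref{lem:one-step-rollout}:
\[
\E\bigl[f(x,u_{\mathrm{CE}})+V^*(F(x,u_{\mathrm{CE}},w))\bigr]\le Q(x,u_{\mathrm{CE}})+\eps.
\]
Second, pass to the nominal evaluation at the chosen control, \(Q(x,u_{\mathrm{CE}})\le \bar Q(x,u_{\mathrm{CE}})+\delta\). Third, use that \(u_{\mathrm{CE}}\) minimizes \(\bar Q(x,\cdot)\) by \eqref{eq:ce-policy}, so that \(\bar Q(x,u_{\mathrm{CE}})\le \bar Q(x,u)\) for every \(u\in U(x)\). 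Fourth, return to the true evaluation at any competitor, \(\bar Q(x,u)\le Q(x,u)+\delta\). Taking the infimum over \(u\), which is attained by Assumption~\ref{ass:well-posed}, gives
\[
\bar Q(x,u_{\mathrm{CE}})\le (TV)(x)+\delta.
\]
Finally, Lemma~\ref{lem:res-bound} gives \((TV)(x)\le V^*(x)+\eps\). Collecting the terms yields
\[
\E\bigl[f(x,u_{\mathrm{CE}})+V^*(F(x,u_{\mathrm{CE}},w))\bigr]\le V^*(x)+2\eps+2\delta,
\]
which is the claim. At \(x=t\), the terminal convention \eqref{eq:terminal-convention} makes both sides zero.

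No step is a serious obstacle. The point needing care is that \(\delta\) is used twice: once at the CE control and once at the competitor control that attains \((TV)(x)\). This is why the supremum over all \(u\in U(x)\) in \eqref{eq:ce-mismatch} matters, and why the bound carries \(2\delta\) rather than \(\delta\). Measurability of the selector and finiteness of the expectations are covered by Assumption~\ref{ass:well-posed}.
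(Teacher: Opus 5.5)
Your proposal is correct and follows the paper's proof essentially step for step. Like the paper, you compare $Q$ and $\bar Q$, using $\delta$ twice (once at $u_{\mathrm{CE}}$ and once at the competitor attaining $(TV)(x)$), then apply Lemma~\ref{lem:res-bound} and the pointwise replacement $V^*\le V+\eps$ at the successor state.

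Two cosmetic points. Attainment of the infimum is not needed, because $\bar Q(x,u_{\mathrm{CE}})\le Q(x,u)+\delta$ for every $u$ already passes to the infimum. At $x=t$, it is the left side and $V^*(t)$ that vanish, not the full right side $V^*(t)+2(\eps+\delta)$.
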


\begin{proof}
Fix \(x\neq t\), and write \(u_{\mathrm{CE}}:=\pi_{\mathrm{CE}}(x)\).
Equation~\eqref{eq:ce-policy} gives
\begin{equation}
\label{eq:ce-greedy-step}
f(x,u_{\mathrm{CE}})+V(F(x,u_{\mathrm{CE}},\bar w))
\le f(x,u)+V(F(x,u,\bar w))
\end{equation}
for every \(u\in U(x)\). Let
\begin{equation}
\label{eq:qbarq-def}
\begin{aligned}
Q(x,u)&:=f(x,u)+\E\bigl[V(F(x,u,w))\bigr],\\
\bar Q(x,u)&:=f(x,u)+V(F(x,u,\bar w)).
\end{aligned}
\end{equation}
Equations~\eqref{eq:ce-mismatch} and~\eqref{eq:qbarq-def} imply
\(|Q(x,u)-\bar Q(x,u)|\le\delta\) for every \(u\in U(x)\). Hence, for any
minimizer \(u^*\) of \((TV)(x)\),
\begin{equation}
\label{eq:ce-q-bound}
\begin{aligned}
Q(x,u_{\mathrm{CE}})
&\le \bar Q(x,u_{\mathrm{CE}})+\delta
\le \bar Q(x,u^*)+\delta\\
&\le Q(x,u^*)+2\delta=(TV)(x)+2\delta.
\end{aligned}
\end{equation}
In \eqref{eq:ce-q-bound}, the first and third inequalities use
\eqref{eq:ce-mismatch} and \eqref{eq:qbarq-def}, the second inequality uses
\eqref{eq:ce-greedy-step} with \(u=u^*\), and the final equality uses the
choice of \(u^*\) as a minimizer in the definition of \(TV\).
Lemma~\ref{lem:res-bound} gives \((TV)(x)\le V^*(x)+\eps\). Moreover,
Assumption~\ref{ass:approx} gives
\begin{equation}
\label{eq:ce-vstar-to-v}
\E\bigl[V^*(F(x,u_{\mathrm{CE}},w))\bigr]
\le
\E\bigl[V(F(x,u_{\mathrm{CE}},w))\bigr]+\eps.
\end{equation}
	The inequality \eqref{eq:ce-q-bound} is used in the second inequality below,
	while \eqref{eq:ce-vstar-to-v} is used in the first inequality:
	\[
	\begin{aligned}
	&\E\bigl[f(x,u_{\mathrm{CE}})
	 +V^*(F(x,u_{\mathrm{CE}},w))\bigr]\\
	&\qquad\le
f(x,u_{\mathrm{CE}})+\E\bigl[V(F(x,u_{\mathrm{CE}},w))\bigr]+
	 \eps\\&\qquad=Q(x,u_{\mathrm{CE}})+\eps\\
	&\qquad\le (TV)(x)+2\delta+\eps\le V^*(x)+2\eps+2\delta\\&\qquad
	 =V^*(x)+2(\eps+\delta).
	\end{aligned}
	\]
If \(x=t\), then \eqref{eq:terminal-convention} gives equality.
\end{proof}

\begin{theorem}[Certainty-equivalent rollout bound]
Suppose Assumptions~\ref{ass:well-posed}--\ref{ass:approx} hold and that
\(\pi_{\mathrm{CE}}\) is proper from the initial states under consideration.
Then, for every such initial state \(x\),
\[
J^{\pi_{\mathrm{CE}}}(x)-V^*(x)
\le 2(\eps+\delta)\E[\T],
\]
where \(\T\) is the hitting time of the terminal state under
\(\pi_{\mathrm{CE}}\).
Consequently, if \eqref{eq:uniform-hitting-bound} holds, then
\[
J^{\pi_{\mathrm{CE}}}(x)-V^*(x)
\le 2N(\eps+\delta).
\]
If instead \eqref{eq:lyapunov-drift} holds along the CE trajectory, then
\[
J^{\pi_{\mathrm{CE}}}(x)-V^*(x)
\le \frac{2(\eps+\delta)}{c}L(x).
\]
\end{theorem}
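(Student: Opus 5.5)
The plan is to repeat the stopped-sum argument from the proof of Theorem~\ref{thm:rollout-bound}, with Lemma~\ref{lem:one-step-ce} in place of Lemma~\ref{lem:one-step-rollout}. The local constant $2\eps$ becomes $2(\eps+\delta)$. Nothing else in that argument depends on the greedy structure of $\piR$; it only uses a one-step inequality of the form $\E[f(x_k,\pi(x_k))+V^*(x_{k+1})\mid x_k]\le V^*(x_k)+e$ at nonterminal states. This is precisely the state-dependent version recorded in the Remark after Theorem~\ref{thm:rollout-bound}, applied with the constant error $e\equiv 2(\eps+\delta)$.

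The first step fixes $x_0=x$, generates the trajectory by $\pi_{\mathrm{CE}}$, and applies Lemma~\ref{lem:one-step-ce} conditionally on $x_k$. This is justified exactly as in the proof of Theorem~\ref{thm:performance-difference}: $w_k$ is independent of $x_k$ and has the law of $w$, and the true dynamics still use the random $w_k$ even though the control was chosen with $\bar w$. The result is
\[
\E\bigl[f(x_k,\pi_{\mathrm{CE}}(x_k))+V^*(x_{k+1})\mid x_k\bigr]\le V^*(x_k)+2(\eps+\delta)\mathbf{1}_{\{t\}^c}(x_k).
\]

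The second step multiplies this by $\mathbf{1}_{\{j:j<\sigma_m\}}(k)$, with $\sigma_m=\T\wedge m$. This factor is a function of $x_k$ by \eqref{eq:stopping-indicator-state}. Taking expectations and summing over $k=0,\dots,m-1$, the $V^*$ terms telescope to $V^*(x)-\E[V^*(x_{\sigma_m})]$. Since $V^*\ge0$, we obtain
\[
\E\Bigl[\sum_{k<\sigma_m}f(x_k,\pi_{\mathrm{CE}}(x_k))\Bigr]\le V^*(x)+2(\eps+\delta)\E[\sigma_m].
\]

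The third step lets $m\to\infty$. Properness of $\pi_{\mathrm{CE}}$ (Assumption~\ref{ass:proper}), together with Lemma~\ref{lem:proper-monotone} and monotone convergence for $\E[\sigma_m]\uparrow\E[\T]$, gives the first claim.

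The two corollaries then follow directly. Substituting $\E[\T]\le N$ from \eqref{eq:uniform-hitting-bound} gives the second bound. For the third, the Lyapunov hitting-time estimate $\E[\T]\le L(x)/c$, which was derived in the proof of Theorem~\ref{thm:rollout-bound}, is re-derived for $\pi_{\mathrm{CE}}$. That derivation used only the drift inequality \eqref{eq:lyapunov-drift} along the closed loop and $L\ge0$, so it carries over verbatim.

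The only point needing care is the first step. The CE policy is chosen using the nominal disturbance $\bar w$, but the conditional expectation must be taken under the true disturbance law. Lemma~\ref{lem:one-step-ce} is stated for an arbitrary nonterminal $x$ with the true expectation over $w$, so substituting $x=x_k$ is legitimate. The boundedness of $V^*$ (Assumption~\ref{ass:well-posed}) ensures all stopped expectations are finite.
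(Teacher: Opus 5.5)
Your proposal is correct and matches the paper's proof. Like the paper, you rerun the stopped-sum argument of Theorem~\ref{thm:rollout-bound} with Lemma~\ref{lem:one-step-ce} supplying the per-step constant $2(\eps+\delta)$, telescope the $V^*$-terms, pass to the limit using properness and monotone convergence, and then substitute the uniform and Lyapunov hitting-time bounds. Your explicit note that the estimate $\E[\T]\le L(x)/c$ must be re-derived along the CE closed loop, which goes through verbatim, usefully spells out a step the paper leaves implicit.
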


\begin{proof}
The proof is the proof of Theorem~\ref{thm:rollout-bound}, with
Lemma~\ref{lem:one-step-ce} replacing Lemma~\ref{lem:one-step-rollout}. Since \(\pi_{\mathrm{CE}}\) is
proper, Lemma~\ref{lem:proper-monotone} justifies the passage from stopped sums
to the infinite-horizon cost. Applying the one-step CE inequality along the
trajectory of \(\pi_{\mathrm{CE}}\), stopping at \(\T\wedge m\), telescoping the
\(V^*\)-terms, and letting \(m\to\infty\) gives
\[
J^{\pi_{\mathrm{CE}}}(x)
\le V^*(x)+2(\eps+\delta)\E[\T],
\]
which proves the first inequality. The remaining two bounds follow by
substituting \eqref{eq:uniform-hitting-bound} and
\eqref{eq:lyapunov-hitting-bound}, respectively.
\end{proof}

\paragraph{Remark.}
The same localization applies to the certainty-equivalent bound. If the value
approximation and model mismatch are controlled by state-dependent quantities
\(\eps(x)\) and \(\delta(x)\) along the CE closed loop, the proof gives
\[
J^{\pi_{\mathrm{CE}}}(x)-V^*(x)
\le
\E\!\left[
\sum_{k=0}^{\T-1} \bigl(2\eps(x_k)+2\delta(x_k)\bigr)
\right],
\]
with an additional \(\eta\E[\T]\) term if the CE minimization is performed only
to \(\eta\)-accuracy. The stated theorem is the uniform-error special case.

\section{Specialization to minimum expected hitting time}
The bound has a particularly transparent form for reachability or
minimum-time problems. Take \(f(x,u)=\mathbf{1}_{\{t\}^c}(x)\), with the same
absorbing terminal convention. Then, for every proper policy,
\(J^{\pi}(x)=\E[\T]\), so the optimal value is the minimum expected hitting time
\[
H^*(x):=V^*(x)=\inf_{\pi}\E[\T], \qquad H^*(t)=0.
\]
Theorem~\ref{thm:rollout-bound} therefore gives, for exact rollout,
\(\E[\T]-H^*(x)\le 2\eps\E[\T]\). If \(2\eps<1\), this can be written as
\[
\E[\T]
\le \frac{H^*(x)}{1-2\eps}.
\]
Thus, in minimum-time problems, a uniformly accurate approximation of
\(H^*\) yields a multiplicative bound on the expected arrival time. The
variants obtained from \eqref{eq:uniform-hitting-bound} and
\eqref{eq:lyapunov-drift} give the corresponding additive bounds
\[
\E[\T]\le H^*(x)+2N\eps,
\qquad
\E[\T]\le H^*(x)+\frac{2\eps}{c}L(x).
\]

For certainty-equivalent rollout the same specialization gives
\(\E[\T]-H^*(x)\le 2(\eps+\delta)\E[\T]\), and hence, if
\(2(\eps+\delta)<1\),
\[
\E[\T]
\le \frac{H^*(x)}{1-2(\eps+\delta)}.
\]
The additive versions are obtained by replacing \(\eps\) with
\(\eps+\delta\) in the preceding estimates obtained from
\eqref{eq:uniform-hitting-bound} and \eqref{eq:lyapunov-drift}.
Here, \(\eps\) is the approximation error in the optimal hitting-time function,
whereas \(\delta\) is the additional one-step distortion introduced by the
certainty-equivalent lookahead.

\paragraph{Analytic certificate for a minimum-time task.}
Consider any finite-state reachability problem with unit cost before absorption,
and suppose an approximate terminal penalty \(V\) satisfies
\(\norminf{V-H^*}\le\eps\). If exact rollout based on \(V\) is proper from an
initial state \(x\), the preceding specialization gives the directly checkable
certificate
\[
\E[\T]
\le \frac{H^*(x)}{1-2\eps},
\qquad 2\eps<1.
\]
Thus, the same absolute approximation accuracy has different consequences on
short and long tasks through the transient horizon. For example, if
\(H^*(x)=20\) and \(\eps=0.02\), then rollout is certified to satisfy
\(\E[\T]\le 20/0.96\approx 20.83\). If the optimal hitting time is
doubled while the same value-approximation accuracy is maintained, the
certified excess arrival time doubles as well. This calculation illustrates how
the theorem converts a uniform error in the approximate hitting-time function
into an explicit arrival-time guarantee.

\paragraph{Robot navigation}
In the robot-navigation interpretation introduced in Section~2, the terminal
state is arrival at the target, and the minimum-time value \(H^*\) is the optimal
expected arrival time. The preceding certificate therefore says that a rollout
planner using an approximate hitting-time function is guaranteed to have a
controlled excess arrival time, provided the closed loop is proper. This is the
sense in which the bound is relevant to moving-obstacle path planning, such as
the framework of Jafari, Hansson, and Wahlberg~\cite{jafariHanssonWahlberg2025}.
The example is included only to interpret the certificate, not to claim a new
algorithmic contribution or a separate performance result for that planner. The
value error \(\eps\) measures how accurately the terminal penalty approximates
the optimal expected arrival time, \(\delta\) measures the local effect of using
a nominal obstacle model in the certainty-equivalent case, and \(\E[\T]\) is
the closed-loop duration over which these local errors accumulate.

\section{Discussion}
The main bound is an SSP analogue of approximate policy-improvement estimates
for discounted or finite-horizon models. In those settings, a discount factor or
fixed horizon determines how local value-function errors are amplified. In the
present undiscounted SSP setting, the corresponding quantity is the expected
hitting time of the terminal state under the rollout policy. Thus the effective
horizon is generated by transience, not by a contraction argument.

The proof isolates three ingredients: sup-norm stability of the Bellman
operator, conversion of value-function error into a one-step rollout residual,
and a stopped-sum argument that accumulates this residual until termination.
This is where the result differs from residual estimates for value-iteration
algorithms: the residual is not propagated by iterating a contraction-like
operator, but by the occupation measure of the particular greedy policy that is
actually implemented. This decomposition also explains the certainty-equivalent
extension: replacing the stochastic lookahead by a nominal one adds a separate
local mismatch term \(\delta\), which is accumulated over the same transient
horizon.

The performance-difference identity and sharpness construction clarify why this
transient horizon cannot generally be replaced by a universal constant. Even in
a deterministic chain, a uniformly small value error can bias the greedy
lookahead by a small amount at each visited state, and these local advantages add
through the occupation measure. The hitting-time factor is therefore a structural
feature of undiscounted SSP rollout, rather than a byproduct of a conservative
sup-norm estimate.

The remarks following the main results indicate several direct extensions,
including local or state-dependent approximation errors and inexact one-step
minimization. These variants are useful in applications, while Theorem~\ref{thm:rollout-bound}
keeps the statement in a compact uniform-error form. A natural next step is to
combine the deterministic certificates developed here with statistical error
bounds for learned value functions or estimated disturbance models.

\section{Conclusions}
We have provided a self-contained analysis of rollout performance in stochastic
shortest path problems that make explicit both the local approximation error
mechanism and the global transient-horizon mechanism. The main message is that
in SSP, rollout losses scale linearly with the expected time to reach the
terminal state: a uniformly accurate value surrogate yields a uniformly small
one-step error, and the total performance loss is obtained by summing this
error over the expected transient length of the closed-loop trajectory.

This viewpoint leads to clean and interpretable bounds for both exact rollout
and certainty-equivalent rollout. For exact rollout, the suboptimality gap is
controlled by \(2\eps\E[\T]\), with immediate corollaries under either
\eqref{eq:uniform-hitting-bound} or \eqref{eq:lyapunov-drift}. Recall that $\eps$ bounds the error between the approximate value function $V(x)$ and the optimal one $V^*(x)$, for all $x$ of interest.

The performance-difference identity and deterministic sharpness construction show that this
transient-horizon dependence is intrinsic, not an artifact of the proof. For
certainty-equivalent rollout, the same structure persists, with the additional
term \(\delta\) capturing the local distortion introduced by nominal rather than
stochastic lookahead. 

Future work includes extending the analysis to multistep rollout strategies, such as MPC. An important question is how increased lookahead depth or receding-horizon optimization affects the dependence of performance bounds on local approximation error and transient hitting times in SSP problems.
\subsection*{ Declaration of generative AI and AI-assisted technologies in the manuscript preparation process}
During the preparation of this work the authors used GPT-5.2
extensively in all parts of preparing this manuscript. 
After using this tool, the authors reviewed and edited the content as needed and take full responsibility for the content of the published article.

\bibliographystyle{elsarticle-num}
\bibliography{references}

\end{document}